\documentclass{amsproc}

\usepackage{amssymb, amscd, enumerate, colonequals, stmaryrd, mathdots, xcolor}
\usepackage{color}
\definecolor{chianti}{rgb}{0.6,0,0}
\definecolor{meretale}{rgb}{0,0,.6}
\definecolor{leaf}{rgb}{0,.35,0}
\usepackage[colorlinks=true, pagebackref, hyperindex, citecolor=meretale, urlcolor=leaf, linkcolor=chianti]{hyperref}

\newtheorem{theorem}{Theorem}[section]
\newtheorem{lemma}[theorem]{Lemma}
\newtheorem{proposition}[theorem]{Proposition}

\theoremstyle{definition}
\newtheorem{example}[theorem]{Example}
\newtheorem{remark}[theorem]{Remark}
\numberwithin{equation}{theorem}

\def\tr{{\operatorname{tr}}}
\def\inrev{\operatorname{in_{rev}}}
\def\image{\operatorname{im}}

\def\GL{\operatorname{GL}}

\def\SO{\operatorname{SO}}
\def\O{\operatorname{O}}

\def\Cl{\operatorname{Cl}}
\def\Pic{\operatorname{Pic}}
\def\res{\operatorname{res}}
\def\Der{\operatorname{Der}}
\def\Hom{\operatorname{Hom}}
\def\Spec{\operatorname{Spec}}

\def\fraka{\mathfrak{a}}
\def\frakb{\mathfrak{b}}
\def\frakm{\mathfrak{m}}
\def\frakp{\mathfrak{p}}

\def\AA{\mathbb{A}}
\def\DD{\mathbb{D}}
\def\GG{\mathbb{G}}
\def\UU{\mathbb{U}}
\def\VV{\mathbb{V}}
\def\WW{\mathbb{W}}
\def\ZZ{\mathbb{Z}}

\def\calK{\mathcal{K}}

\def\ge{\geqslant}
\def\le{\leqslant}
\def\bar{\overline}
\def\hat{\widehat}
\def\tilde{\widetilde}
\def\to{\longrightarrow}
\def\mapsto{\longmapsto}

\begin{document}
\title{The variety of nilpotent matrices is $F$-regular}

\author{Jack Jeffries}
\address{Department of Mathematics, University of Nebraska-Lincoln, 203 Avery Hall, Lincoln, NE~68588, USA}
\email{jack.jeffries@unl.edu}

\author{Vaibhav Pandey}
\address{Department of Mathematics, Indian Institute of Technology Madras, Chennai, Tamil Nadu~600036, India}
\email{vaibhavpandey@iitm.ac.in}

\author{Anurag K. Singh}
\address{Department of Mathematics, University of Utah, 155 South 1400 East, Salt Lake City, UT~84112, USA}
\email{singh@math.utah.edu}

\thanks{J.J. was supported by NSF CAREER award DMS 2044833 and A.K.S. by NSF grants DMS~2101671 and DMS~2349623.}

\subjclass[2020]{Primary 13A50; Secondary 13A35, 13C40, 14L30}
\keywords{Groups actions; invariant rings; complete intersections; $F$-regular}

\begin{abstract}
We give an elementary proof that the coordinate ring of the variety of nilpotent matrices is $F$-regular; over an infinite field $K$, this ring also arises as the nullcone for the conjugation action of the general linear group $\textrm{GL}_n(K)$ on the polynomial ring $K[X]$, where~$X$ is an $n\times n$ matrix of indeterminates. We prove that the divisor class group of the coordinate ring is the cyclic group~$\ZZ/n\ZZ$.

We then study the case of symmetric nilpotent matrices, where the picture is completely different: the coordinate ring is not normal for $n\geqslant 2$; for $K$ algebraically closed of characteristic other than two, we prove that the coordinate ring is an integral domain precisely when $n$ is odd.
\end{abstract}
\maketitle

\section{Nilpotent matrices}

Set $S\colonequals K[X]$, where $K$ is a field, and
\[
X\colonequals\begin{bmatrix}
x_{11} & x_{12} & \hdots & x_{1n}\\
x_{21} & x_{22} & \hdots & x_{2n}\\
\vdots & \vdots & & \vdots \\
x_{n1} & x_{n2} & \hdots & x_{nn}
\end{bmatrix}
\]
is an $n\times n$ matrix of indeterminates. The characteristic polynomial of $X$ takes the form
\[
\det(tI-X)\ =\ t^n - c_1t^{n-1} + c_2t^{n-2} - \dots +(-1)^nc_n,
\]
where, for example, $c_1$ is the trace of the matrix $X$, and $c_n$ its determinant. The algebraic set defined by the vanishing of the polynomials $c_1,c_2,\dots,c_n$ in $\AA^{n\times n}_K$ is precisely the set of nilpotent~$n\times n$ matrices. We prove:

\begin{theorem}
\label{theorem:nilpotent}
Let $X$ be an $n\times n$ matrix of indeterminates over a field $K$. Then~$R\colonequals K[X]/(c_1,\dots,c_n)$ is a normal complete intersection ring, with divisor class group $\ZZ/n\ZZ$. If~$K$ has characteristic zero, the ring $R$ has rational singularities; if $K$ has positive characteristic, $R$ is $F$-regular.
\end{theorem}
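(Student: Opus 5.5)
We will reduce everything to a single well-chosen localization, combined with known facts about determinantal-type rings. The plan has three steps.

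\textbf{Complete intersection and normality.} First, $R$ has dimension $n^2-n$. Indeed, the nilpotent cone is irreducible of that dimension: it is the closure of the regular nilpotent orbit, and we can also see this by a Gröbner degeneration. So $c_1,\dots,c_n$ form a regular sequence in the polynomial ring, and $R$ is a complete intersection, hence Cohen--Macaulay. To get normality we verify Serre's condition $R_1$, by checking that the singular locus has codimension at least two. The Jacobian of $(c_1,\dots,c_n)$ has full rank exactly at the regular nilpotent matrices. Its complement consists of nilpotents with at least two Jordan blocks, and the next orbit (type $(n-1,1)$) has dimension $n^2-n-2$, giving codimension two. Over an arbitrary field we would argue this directly, without orbit language.

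\textbf{Localization at $x_{21}$.} Next we invert a suitable entry, for instance $x_{21}$ or a carefully chosen polynomial $f$ that is a minor of $X$, to make $R_f$ regular, in fact a localized polynomial ring. The model is the companion-matrix normal form. After inverting the $(n-1)$-minor $\Delta$ formed by the vectors $e_1, Xe_1,\dots,X^{n-1}e_1$, the matrix $X$ is conjugate over $S_\Delta$ to a companion matrix. Equivalently, the map $\GL_n\times\{\text{companion}\}\to\{X:\Delta\ne0\}$ is an isomorphism, so $R_\Delta\cong K[\text{entries of }g][\det^{-1},\ldots]$ localized. Concretely, the variables are the entries of the matrix $[e_1\ Xe_1\ \cdots\ X^{n-1}e_1]$ together with the $c_i$. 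Hence $R_\Delta$ is a localized polynomial ring, so it is regular and factorial.

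\textbf{Class group.} By Nagata's theorem, $\Cl(R)$ is generated by the classes of the height one primes minimal over $\Delta R$. We expect $\Delta R$ to have a single minimal prime $\frakp$, with $\Delta R=\frakp^{(n)}$; the degree bookkeeping suggests this, since $\Delta$ has degree $\binom n2$. The task is then to show that $[\frakp]$ has order exactly $n$. We would realize $\frakp$ via the ideal of $1$-column minors, namely the first column ideal $I_1$ of $X^{n-1}$, whose class has order $n$. The order cannot be smaller: a principal generator of $\frakp^{(k)}$ for $k<n$ would be a homogeneous element of the wrong degree, or torus weight, under the diagonal torus action.

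\textbf{Singularities.} Finally, for $F$-regularity we use the Glassbrenner / Hochster--Huneke criterion. $R$ is Gorenstein, $R_\Delta$ is regular, and so it suffices to show that $R/\Delta R$, or a suitable $F$-pure deformation, is $F$-pure, or to exhibit $\Delta^N$-test elements via $F$-splitting. Our choice is to degenerate to $\inrev$ with respect to a term order: we would show the initial ideal of $(c_1,\dots,c_n,\Delta)$ is squarefree, by finding a term order in which $\operatorname{in}(c_i)$ are distinct squarefree monomials. Then $R/\Delta R$ is $F$-pure by deformation and Fedder, and Glassbrenner's criterion gives $F$-regularity. Rational singularities in characteristic zero then follow by reduction modulo $p$ (Smith, Hara), since $F$-regularity holds for all $p$. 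The main obstacle is this last step, finding the term order that makes the relevant initial ideal squarefree. Our first attempt would be the reverse lexicographic order with variables ordered along antidiagonals, where $\inrev(c_i)$ should be the product of the entries $x_{1,i},x_{2,i-1},\ldots$ on the $i$-th superdiagonal-type path.
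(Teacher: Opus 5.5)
Your Steps 1 and 2 are sound, and Step 2 is a nice observation. Let $\Delta=\det[e_1\ Xe_1\ \cdots\ X^{n-1}e_1]$. On the locus $\Delta\neq0$, the map $X\mapsto([e_1\ Xe_1\ \cdots\ X^{n-1}e_1],c_1,\dots,c_n)$ identifies $S_\Delta$ with a localized polynomial ring. (The first factor ranges over invertible matrices with first column $e_1$, not all of $\GL_n$.) So $R_\Delta$ is factorial over any field.

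\textbf{The $F$-regularity step fails with this $\Delta$.} For $n\ge2$ the ring $R/\Delta R$ is never reduced; this is exactly your own expectation $\Delta R=\frakp^{(n)}$ from Step 3. Already for $n=2$ one has $R\cong K[x_{11},x_{12},x_{21}]/(x_{11}^2+x_{12}x_{21})$ and $\Delta=x_{21}$, so $R/\Delta R\cong K[x_{11},x_{12}]/(x_{11}^2)$. Hence $R/\Delta R$ is not $F$-pure. Nor can any term order make the initial ideal of $(c_1,\dots,c_n,\Delta)$ squarefree, since that would force this ideal to be radical.

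The two roles need different elements. For $F$-regularity, use the bottom right $(n-1)$-minor of $X$, as the paper does. A nilpotent matrix with a nonzero $(n-1)$-minor is regular, so your Step 1 Jacobian computation shows $R$ localized at this minor is regular. Under the antidiagonal revlex order, the minor's leading monomial lies on the $(n+1)$st antidiagonal, disjoint from the supports of $\inrev(c_1),\dots,\inrev(c_n)$. What is needed here is pairwise coprimality of the leading monomials, not mere distinctness. Then $\Delta(c_1\cdots c_n)^{p-1}\notin\frakm^{[p]}$ for this minor, and Glassbrenner's criterion applies.

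\textbf{The class group step is incomplete.} Three facts are only asserted: that $\Delta R$ has a unique minimal prime $\frakp$, that $\Delta R=\frakp^{(n)}$, and that the ideal of entries of $X^{n-1}e_1$ has order $n$.

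Your torus argument is correct as far as it goes. Give $x_{ij}$ weight $\epsilon_i-\epsilon_j$; then $\Delta$ has weight $(1-n,1,\dots,1)$. So if $\Delta R=\frakp^{(m)}$, the class $[\frakp]$ has order exactly $m$. But this does not determine $m$. Degree alone cannot either: for odd $n$, the equation $n\deg f=k\binom n2$ is solvable for every $k$.

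The missing computation is a pullback along the orbit map $\GL_n\to\UU$, $B\mapsto BJB^{-1}$, where $Je_{i+1}=e_i$. Writing $v=B^{-1}e_1$,
\[
\Delta(BJB^{-1})=\det(B)\det[v\ Jv\ \cdots\ J^{n-1}v]=\pm\det(B)\,\big((B^{-1})_{n1}\big)^n.
\]
Here $\det(B)\,(B^{-1})_{n1}$ is, up to sign, an $(n-1)$-minor of $B$, hence irreducible. The orbit map is smooth and $\VV\smallsetminus\UU$ has codimension two. Together these give both that $\sqrt{\Delta R}$ is prime and that $m=n$.

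With this filled in, your route (factorial $R_\Delta$, Nagata's theorem, torus weights) gives $\Cl(R)\cong\ZZ/n\ZZ$ directly over any field. That is a genuine alternative to the paper, which computes $\Pic(G/H)$ via Popov's exact sequence over an algebraically closed field and then uses the kernel bundle $\calK$ to pass to arbitrary $K$.
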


The fact that $R$ has rational singularities was previously known by other methods \cite{KP, Donkin, Mehta:vdK, Weyman}; see also Remark~\ref{remark:history}.

\begin{remark}
When $K$ is an infinite field, the ring $R$ in Theorem~\ref{theorem:nilpotent} also arises as a nullcone: for a group $G$ acting on a polynomial ring $S$ via degree-preserving~$K$-algebra automorphisms, the \emph{nullcone} of the action is the ring~$S/(\frakm_{S^G}S)$, where $\frakm_{S^G}$ is the homogeneous maximal ideal of $S^G$. Set~$S\colonequals K[X]$ for~$X$ an $n\times n$ matrix of indeterminates, and consider the general linear group~$\GL_n(K)$ acting on $S$ by conjugation, i.e., take the action determined~by 
\[
M\colon X\mapsto MXM^{-1}
\]
where $M\in \GL_n(K)$. Then the invariant ring is generated by the coefficients of the characteristic polynomial of $X$, i.e.,
\begin{equation}
\label{equation:conjugation:invariant}
S^{\GL_n(K)}\ =\ K[c_1,c_2,\dots,c_n].
\end{equation}
While this is certainly well-known, we sketch a proof following \cite[Example~2.1.3]{Derksen:Kemper}, as we require a modification in the next section.

The proof reduces to the case where the field $K$ is algebraically closed. Since the characteristic polynomial of $X$ is fixed under conjugation, it is clear that the $c_i$ are invariant. The symmetric group $\Sigma_n$ may be viewed as the subgroup of permutation matrices; upon specializing the off-diagonal entries of $X$ to zero, $c_1,\dots,c_n$ specialize to the elementary symmetric polynomials $e_1,\dots,e_n$ in the diagonal entries $x_{11},\dots,x_{nn}$. The action of $\GL_n(K)$ on $S$ is a right action, corresponding to a left action on the affine space $\AA$ of $n\times n$ matrices over $K$. Let~$\DD\subset\AA$ be the set of diagonal matrices, and $\WW\subset\AA$ the set of matrices with distinct eigenvalues; note that $\WW$ is Zariski open --- its complement is defined by the vanishing of the discriminant of~$\det(tI-X)$. Since a matrix with distinct eigenvalues is diagonalizable, $\WW$ is contained in the image of $\DD$ under the $\GL_n(K)$ action. The restriction of an invariant $f\in S^{\GL_n(K)}$ to $\DD$ is $\Sigma_n$-invariant, i.e., there exists a polynomial $p(e_1,\dots,e_n)$ that agrees with $f$ on~$\DD$. But then the polynomial
\[
f-p(c_1,\dots,c_n)
\]
vanishes on $\DD$, and hence also on the dense subset $\WW\subset\AA$. It follows that $f$ equals~$p(c_1,\dots,c_n)$, proving~\eqref{equation:conjugation:invariant}.

Nullcones have been studied extensively in the last some years; in \cite{HJPS} it is proven that for various classical group actions, the irreducible components of the nullcones are Cohen-Macaulay rings, while their $F$-regularity is investigated in~\cite{Lorincz:ASENS, Lorincz:symplectic, PTW}. Theorem~\ref{theorem:nilpotent} fits into this body of work: it says precisely that the nullcone for the conjugation action of $\GL_n(K)$ is $F$-regular. In contrast, consider an algebraically closed field $K$ of characteristic other than two, $S\colonequals K[X]$ for $X$ a \emph{symmetric} $n\times n$ matrix of indeterminates where $n\ge 2$, and the orthogonal group $\O_n(K)$ acting on $S$ via conjugation. Then the nullcone is not normal; see Theorem~\ref{theorem:sym}.
\end{remark}

\begin{proof}[Proof of Theorem~\ref{theorem:nilpotent}]
We first claim that $c_1,\dots,c_n$ form a regular sequence in $S\colonequals K[X]$. For this, it suffices to verify that the off-diagonal entries of~$X$, along with $c_1,\dots,c_n$, form a system of parameters for the polynomial ring~$S$, i.e., that
\[
K[x_{11},x_{22},\dots,x_{nn}]/(\bar{c}_1,\dots,\bar{c}_n)
\]
is Artinian, where $\bar{c}_i$ denotes the image of $c_i$ in $S/(x_{ij}:i\neq j)$. This is clear since $\bar{c}_1,\dots,\bar{c}_n$ are precisely the elementary symmetric polynomials in $x_{11},x_{22},\dots,x_{nn}$. An alternative argument that the elements $c_1,\dots,c_n$ form a regular sequence in $S$ appears later within this proof.

We next prove that $R$ has rational singularities, or that $R$ is $F$-regular; for this, it suffices to assume that $K$ is algebraically closed, of characteristic $p>0$, and to prove that $R$ is $F$-regular --- by \cite[Theorem~4.3]{Smith:ratsing}, it then follows that $R$ has rational singularities when $K$ has characteristic zero. Towards this, we first prove that $R$ is $F$-pure using Fedder's criterion, \cite[Theorem~1.12]{Fedder}; since $c_1,\dots,c_n$ is a regular sequence in $S$, one needs to verify that
\begin{equation}
\label{equation:fedder}
(c_1 \cdots c_n)^{p-1}\ \notin\ \frakm^{[p]},
\end{equation}
where $\frakm$ is the homogeneous maximal ideal of $S$.

Consider the monomial order
\[
x_{11} > x_{12} > \cdots > x_{1n} > x_{21} > \cdots > x_{nn}
\]
and the induced revlex term order on $S$. The leading monomial of each $c_i$ is the product of the indeterminates on the $i\textsuperscript{th}$ antidiagonal, namely
\[
\inrev(c_i)\ =\ x_{1i}\, x_{2,i-1}\, \cdots\, x_{i1}.
\]
It follows that $(c_1 \cdots c_n)^{p-1}$ has leading monomial
\[
\prod_{i+j\le n+1} x_{ij}^{p-1}
\]
so~\eqref{equation:fedder} indeed holds.

We use Glassbrenner's criterion \cite[Theorem~3.1]{Glassbrenner} to show that $R$ is $F$-regular. Set $\Delta$ to be the bottom right size $n-1$ minor of $X$. We first verify that $\Delta$ is not in any minimal prime of the ideal $(c_1,\dots,c_n)S$. For this, it suffices to note that the elements $c_1,\dots,c_n,\Delta$ form a regular sequence in $S$. Under the revlex term order above, the leading monomial of $\Delta$ is the product of the indeterminates on the $(n+1)\textsuperscript{st}$ antidiagonal, i.e.,
\[
\inrev(\Delta)\ =\ x_{2n}\, x_{3,n-1}\, \cdots\, x_{n2}.
\]
It follows that the leading monomials of $c_1,\dots,c_n,\Delta$ form a regular sequence in $S$, and hence that $c_1,\dots,c_n,\Delta$ is a regular sequence.

Next, note that the algebraic set $\VV\colonequals\VV(c_1,\dots,c_n)$ consists precisely of~$n\times n$ matrices all of whose eigenvalues are zero. Considering their Jordan form, for example, matrices in $\VV$ have rank at most $n-1$. The group $G\colonequals\GL_n(K)$ acts on $\VV$ by conjugation; the matrices in $\VV$ with rank exactly $n-1$, i.e., the \emph{regular nilpotent matrices}, constitute a single orbit, as they all have the same Jordan form, and hence form a smooth subset. Those for which the bottom right $n-1$ minor is invertible form an open subset, that is also smooth; as $R$ is reduced, it follows that $R_\Delta$ is regular. Hence some power of $\Delta$ is a test element for the ring $R$ by~\cite[Theorem~3.4]{HH:strong}. But the test ideal of~$R$ is radical since $R$ is $F$-pure, \cite[Proposition~2.5]{Fedder:Watanabe}, so~$\Delta$ is a test element. By Glassbrenner's criterion, it thus suffices to verify that
\[
\Delta(c_1 \cdots c_n)^{p-1}\ \notin\ \frakm^{[p]}.
\]
This is straightforward considering, once again, the leading monomial under the revlex term order; this completes the proof that $R$ is $F$-regular.

For the calculation of the divisor class group, assume first that $K$ is algebraically closed. Consider once again the conjugation action of $G$ on the set $\VV$ of nilpotent matrices; the orbits correspond to partitions of $n$ along Jordan blocks. The regular nilpotent matrices constitute a single orbit, say $\UU$, that has dimension $n^2-n$, while other orbits have strictly smaller dimension: the partition $(n-1,1)$ corresponds to an orbit of dimension $n^2-n-2$, while other orbits have yet smaller dimension; see Remark~\ref{remark:orbits}. It follows that $\VV\smallsetminus\UU$ has no components of codimension one, and hence that
\[
\Cl(R)\ \cong\ \Cl(\UU).
\]
Since $\UU$ is smooth, Weil divisors and Cartier divisors on $\UU$ coincide, so
\[
\Cl(\UU)\ \cong\ \Pic(\UU).
\]
The orbit $\UU$ may be viewed as $G/H$, where $H$ is the stabilizer in $G$ of a nilpotent Jordan block $J$. By Remark~\ref{remark:orbits},
\[
H\ =\ \{a_0\mathbf{1} + a_1J + a_2J^2 + \dots + a_{n-1}J^{n-1}\mid a_i\in K,\ a_0\neq 0\}.
\]

Let $\chi(-)$ denote the \emph{algebraic character group} of an algebraic group, i.e.,
\[
\chi(-)\colonequals\Hom_{\textrm{alg.group}}(-,\GG_m).
\]
Since $\Pic(G)=0$, by \cite[Theorem~4]{Popov} and the surrounding discussion, one has an exact sequence
\begin{equation}
\label{equation:popov}
\CD
0 @>>> \chi(G) @>\res>> \chi(H) @>>> \Pic(G/H) @>>> 0,
\endCD
\end{equation}
where $\res$ denotes the restriction map. Note that $\chi(G)$ is infinite cyclic, being generated by the determinant character. The group $\chi(H)$ is also infinite cyclic, since each algebraic character factors through $H/H_u$, where
\[
H_u\colonequals \mathbf{1}+JK[J]
\]
is the unipotent radical of $H$; specifically, $H/H_u\cong\GG_m$, and $\chi(H)$ is generated by the character
\[
\Psi\colon a_0\mathbf{1} + a_1J + a_2J^2 + \dots + a_{n-1}J^{n-1}\ \mapsto\ a_0.
\]
The determinant character restricts to $H$ with
\[
\det(a_0\mathbf{1} + a_1J + a_2J^2 + \dots + a_{n-1}J^{n-1})\ =\ a_0^n,
\]
so the exact sequence~\eqref{equation:popov} takes the form
\[
\CD
0 @>>> \ZZ @>n>> \ZZ @>>> \Pic(G/H) @>>> 0,
\endCD
\]
completing the proof when $K$ is algebraically closed. 

To extend the calculation of the divisor class group to the case of an arbitrary field, it suffices to note that the order $n$ line bundle on the space $\UU$ of regular nilpotent matrices is defined independent of the field; it is simply the bundle $\calK$ that associates a regular nilpotent matrix to its kernel:

The kernel of the nilpotent Jordan block $J$ is spanned by the basis vector~$e_1$. Let $K_\Psi$ denote the field $K$ viewed as a one-dimensional representation of $H$ with
\[
hz\colonequals \Psi(h)z
\]
for $h\in H$ and $z\in K_\Psi$. We claim that the kernel bundle $\calK$ has total space $G\times_H K_\Psi$, namely~$G\times K_\Psi$ under the equivalence relation
\[
(g,\, z)\ \sim\ (gh,\, \Psi(h)^{-1}z).
\]
Identifying $G/H$ with $\UU$ via $gH\mapsto gJg^{-1}$, one has
\begin{alignat*}3
G\times_H K_\Psi\ & \to\ \qquad \calK\\
(g,\, z)\qquad & \mapsto\ (gJg^{-1},\, zge_1);
\end{alignat*}
note that this is indeed well-defined at the level of equivalence classes:
\[
(gh,\, \Psi(h)^{-1}z)\ \mapsto\ (ghJh^{-1}g^{-1},\, \Psi(h)^{-1}zghe_1)\ =\ (gJg^{-1},\, zge_1),
\]
since $h$ commutes with $J$ and $he_1=\Psi(h)e_1$.
\end{proof}

\begin{remark}
Let $X$ and $R$ be as in Theorem~\ref{theorem:nilpotent}, and let $X'$ denote the matrix obtained by deleting the first row of $X$. We claim that the ideal $\frakp\colonequals I_{n-1}(X')R$ is a height one prime that generates the divisor class group of $R$.

We first verify that the ideal $\frakp$ is radical. Consider its preimage in $S$,
\[
\fraka \colonequals (c_1,\dots,c_{n-1}) + I_{n-1}(X').
\]
With the revlex term order from the proof of Theorem~\ref{theorem:nilpotent}, the leading monomials of $c_1,\dots,c_{n-1}$ involve indeterminates that are disjoint from each other, and also from the indeterminates that occur in the leading monomials of $I_{n-1}(X')$. Set $\frakb$ to be the ideal of $S$ generated by these leading monomials. The ring $S/\inrev(I_{n-1}(X'))$ is Cohen-Macaulay by \cite[Theorem~4.3.2 and Remark~4.3.5]{BCRV}, so $S/\frakb$ is Cohen-Macaulay as well. Since $I_{n-1}(X')S$ has height $2$, and $\frakb \subseteq \inrev(\fraka)$, one has
\[
n^2-n-1\ =\ \dim S/\frakb\ \ge\ \dim S/\fraka\ \ge \ n^2-n-1,
\]
so equality holds in the above display.

The rings $S/I_{n-1}(X')$ and $S/\inrev(I_{n-1}(X'))$ have the same multiplicity; killing the regular sequences $c_1,\dots,c_{n-1}$ and $\inrev(c_1),\dots,\inrev(c_{n-1})$ in the respective rings, it follows that $S/\fraka$ and $S/\frakb$ have the same multiplicity, and hence that the rings $S/\inrev(\fraka)$ and $S/\frakb$ have the same multiplicity. As $S/\frakb$ is Cohen-Macaulay, additivity of multiplicities yields $\frakb=\inrev(\fraka)$. Since $\frakb$ is radical, it follows that $\fraka$ is radical, and hence that $\frakp$ is a radical ideal of $R$.

To see that $\VV(\frakp)$ is an irreducible algebraic set, it suffices to verify that its dense subset $\VV(\frakp)\cap\UU$ is irreducible. A regular nilpotent matrix $A$ belongs to $\VV(\frakp)$ precisely if the matrix $A'$ obtained by deleting the first row has rank $n-2$; as~$A$ has rank $n-1$, this is equivalent to $e_1 \in \image(A)$. Writing $A=BJB^{-1}$, with $J$ the nilpotent Jordan block, $\image(A) = \image[b_1\, \dots\, b_{n-1}]$, where $b_i$ is the $i\textsuperscript{th}$ column of $B$. Then $e_1\in\image(A)$ if and only if $\det[b_1\, \dots\, b_{n-1}\, e_1] = 0$; this defines an irreducible subset $\WW$ of $\GL_n(K)$, with $\VV(\frakp)\cap\UU$ being the image of $\WW$ under the map
\begin{alignat*}3
\GL_n(K)\ & \to\ \quad \UU\\
B\quad & \mapsto\ BJB^{-1}.
\end{alignat*}
Since $\WW$ is irreducible, so is its image $\VV(\frakp)\cap\UU$. This completes the argument that~$\frakp$ is a prime ideal of $R$.

For a regular nilpotent matrix $A$, one has $\ker(A^{n-1}) = \image(A)$, so the condition that $A\in\VV(\frakp)$ is equivalent to the condition that $A^{n-1}e_1=0$. Hence
\begin{alignat*}3
\UU\ & \to \qquad \calK\\
A\ & \mapsto\ (A,\ A^{n-1} e_1)
\end{alignat*}
is a regular section, whose vanishing locus is precisely $\VV(\frakp)\cap\UU$. It follows that the height one prime ideal $\frakp$ corresponds to the line bundle $\calK$.

Since the ring $R$ is $F$-regular/of $F$-regular type, each torsion element of $\Cl(R)$ is a maximal Cohen-Macaulay $R$-module: when the characteristic of $K$ is coprime to the order of the torsion element, this is \cite[Corollary~2.9]{Watanabe:dim2}, while the characteristic assumption is removed in \cite[Theorem~C]{Carvajal-Rojas}. Under the weaker assumption that a ring has rational singularities, it is no longer true that torsion elements of the divisor class group are maximal Cohen-Macaulay modules, \cite[Theorem~1.1]{Singh:cyclic}.

In conclusion, the prime ideal $\frakp$ and all its symbolic powers---equivalently, all rank one reflexive $R$-modules---are maximal Cohen-Macaulay $R$-modules.
\end{remark}

\begin{remark}
\label{remark:orbits}
Let $N$ be an $n\times n$ matrix over a field $K$. If the minimal polynomial of $N$ equals its characteristic polynomial, then the centralizer of $N$ in the matrix algebra~$\mathrm{Mat}_n(K)$ is
\[
K[N]\ =\ \mathrm{span}_K\{ \mathbf{1}, N, N^2, \dots, N^{n-1}\}.
\]
In particular, the centralizer of an $n\times n$ nilpotent Jordan block $N$ has dimension~$n$, corresponding to the orbit of $N$ having dimension $n^2-n$.

More generally, let $N$ be a nilpotent matrix with Jordan blocks of size $\lambda_1,\dots,\lambda_k$. Then the orbit of $N$ has dimension
\[
n^2-\sum(\lambda'_i)^2,
\]
where $(\lambda'_1,\dots,\lambda'_\ell)$ is the \emph{conjugate partition} of $n$, i.e., the partition obtained by reflecting the Ferrers diagram of $(\lambda_1,\dots,\lambda_k)$; see \cite[Corollary~6.1.4]{Collingwood:McGovern}.

For example, the partition $(n)$ has conjugate partition $(1,\dots,1)$, with the corresponding orbit having dimension $n^2-n$; this is the orbit of largest dimension, consisting of the regular nilpotent matrices. The next largest dimension comes from the partition $(n-1,1)$, that has conjugate partition $(2,1,\dots,1)$, with the corresponding orbit having dimension $n^2-(4+1+\dots+1) = n^2-n-2$.
\end{remark}

\begin{remark}
\label{remark:history}
For any partition of $n$, the closure of the corresponding orbit is known to have rational singularities: this is due to Kraft and Procesi \cite{KP} in characteristic zero, and to Donkin \cite{Donkin} in characteristic $p>0$. Alternative approaches can be found in \cite{Mehta:vdK, Weyman}.
\end{remark}

\section{Symmetric nilpotent matrices}

Let $X$ be a symmetric $n\times n$ matrix of indeterminates over a field $K$, and
\[
\det(tI-X)\ =\ t^n - c_1t^{n-1} + c_2t^{n-2} - \dots +(-1)^nc_n
\]
its characteristic polynomial. Our focus in this section is on the coordinate ring of the algebraic set of symmetric nilpotent matrices, $R\colonequals K[X]/(c_1,\dots,c_n)$, with Theorem~\ref{theorem:sym} summarizing our main results. We first record that when $K$ is infinite, the ring $R$ is indeed the nullcone for the conjugation action of the orthogonal group
\[
\O_n(K)\colonequals\{Q\in \GL_n(K)\, | \, Q^\tr Q = \mathbf{1}\}
\]
on the polynomial ring $K[X]$.

\begin{proposition}
\label{proposition:symmetric:nullcone}
Let $X$ be a symmetric $n\times n$ matrix of indeterminates over an infinite field $K$. Consider the action of the orthogonal group $\O_n(K)$ on $S\colonequals K[X]$ determined by 
\[
Q\colon X\mapsto QXQ^\tr
\]
where $Q\in\O_n(K)$. Then the invariant ring is generated by the coefficients of the characteristic polynomial of $X$, i.e.,
\[
S^{\O_n(K)}\ =\ K[c_1,c_2,\dots,c_n].
\]
\end{proposition}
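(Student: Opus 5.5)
The plan is to follow the argument sketched above for the conjugation action of $\GL_n(K)$, replacing "a matrix with distinct eigenvalues is diagonalizable" by a dominance statement for the orthogonal group. Write $\AA$ for the affine space of symmetric $n\times n$ matrices, $\DD\subset\AA$ for the diagonal matrices, and $\Sigma_n\subset\O_n(K)$ for the permutation matrices, which are orthogonal over any field.

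First, the easy inclusion. Since $\det(tI-QXQ^\tr)=\det(tI-X)$ for $Q\in\O_n(K)$, each $c_i$ is invariant, so $K[c_1,\dots,c_n]\subseteq S^{\O_n(K)}$.

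For the reverse inclusion, take $f\in S^{\O_n(K)}$. Its restriction to $\DD$ is invariant under $\Sigma_n$, since conjugating by a permutation matrix permutes the diagonal entries. On $\DD$ the $c_i$ specialize to the elementary symmetric polynomials $e_i$ in $x_{11},\dots,x_{nn}$. By the fundamental theorem on symmetric polynomials there is a polynomial $p$ with $f|_\DD=p(e_1,\dots,e_n)$. Then $g\colonequals f-p(c_1,\dots,c_n)$ is an $\O_n(K)$-invariant polynomial vanishing on $\DD$. Because $K$ is infinite, it follows that $g$ vanishes on the set
\[
\{QDQ^\tr\mid Q\in\O_n(K),\ D\in\DD(K)\}.
\]
It therefore suffices to show that this set is Zariski dense in $\AA$, viewed over $\bar K$; then $g=0$.

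The density step is the heart of the proof, and I would argue it as follows.

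\emph{Dominance.} Consider the morphism $\mu\colon\O_n\times\DD\to\AA$ given by $(Q,D)\mapsto QDQ^\tr$. Take a point $(\mathbf{1},D)$ with $D$ having distinct diagonal entries $d_1,\dots,d_n$. In characteristic other than two, the tangent space of $\O_n$ at $\mathbf{1}$ is the space of skew-symmetric matrices $A$. The differential of $\mu$ sends $(A,E)$ to $AD-DA+E$. The $(i,j)$ entry of $AD-DA$ is $a_{ij}(d_j-d_i)$, so as $A$ varies these matrices fill out all symmetric matrices with zero diagonal. Adding the diagonal $E$ then gives all of $\AA$. So the differential is surjective and $\mu$ is dominant.

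\emph{Density of rational points.} It remains to see that $\O_n(K)\times\DD(K)$ is dense in $\O_n\times\DD$. For $\DD(K)$ this is clear, as $K$ is infinite. For the identity component of $\O_n$, I would use the Cayley transform $A\mapsto(\mathbf{1}-A)(\mathbf{1}+A)^{-1}$. It is a birational map from skew-symmetric matrices to $\SO_n$, defined over $K$. Since skew-symmetric matrices form an affine space, their $K$-points are dense, and hence $\SO_n(K)$ is dense in $\SO_n$. Density of the identity component already suffices, because dominance was checked at the identity.

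Combining these, $\mu$ carries a dense set of $K$-points onto a dense subset of $\AA$, so $g$ vanishes identically and $f=p(c_1,\dots,c_n)$.

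The main obstacle is characteristic two. There the Lie algebra computation changes, since the tangent space to $\O_n$ is no longer the skew-symmetric matrices, and the Cayley parametrization breaks down. For that case I would first try to show directly that the $\O_n(K)$-orbits of diagonal matrices are dense. One route is to exhibit explicit rational families of orthogonal $2\times 2$ blocks acting on pairs of coordinates, and to check that iterating them reaches a dense set. If that fails, the characteristic-two case may need a separate argument.
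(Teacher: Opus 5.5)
\textbf{There is a genuine gap: characteristic two.} Your argument is sound when $\operatorname{char}K\neq 2$. The differential of $(Q,D)\mapsto QDQ^\tr$ at $(\mathbf{1},D)$ is surjective when $D$ has distinct entries, so the map is dominant on the irreducible variety $\SO_n\times\DD$. The Cayley transform makes $\SO_n(K)$ dense in $\SO_n$. Hence the rational set $\{QDQ^\tr\}$ is dense, and $g=0$.

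Characteristic two is covered by the statement, and it is exactly the case the paper needs. In the proof of Theorem~\ref{theorem:sym}(4), the proposition is applied over an algebraically closed field of characteristic two, to place the Jacobian minor $\Delta$ in $K[c_1,\dots,c_n]$. Both of your tools fail there:
\begin{enumerate}[\quad(a)]
\item The scheme $Q^\tr Q=\mathbf{1}$ is non-reduced. Its tangent space at $\mathbf{1}$, cut out by $A+A^\tr=0$, is the space of all symmetric matrices. This has dimension $n(n+1)/2$ rather than $n(n-1)/2$, so your differential computation is not over the tangent space of the group and no longer certifies dominance.
\item Since $\mathbf{1}-A=\mathbf{1}+A$, the Cayley transform is identically $\mathbf{1}$.
\end{enumerate}
Your fallback via $2\times 2$ blocks is only a suggestion. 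As written, the proposal proves the statement only away from characteristic two.

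\textbf{The missing idea} is the paper's Lemma~\ref{lemma:orthogonal:char:two}. It replaces the infinitesimal argument by linear algebra valid in every characteristic: over an algebraically closed field $K$, a symmetric matrix $A$ with distinct eigenvalues is orthogonally diagonalizable. The proof runs as follows.
\begin{enumerate}[\quad(a)]
\item Suppose $B=PAP^{-1}$ is diagonal. Then $BPP^\tr=PAP^\tr$ is symmetric, so $PP^\tr$ commutes with $B$.
\item Since $B$ has distinct diagonal entries, $PP^\tr$ is diagonal.
\item Choose a diagonal $C$ with $C^2=PP^\tr$. Then $Q\colonequals C^{-1}P$ is orthogonal, and $QAQ^\tr=C^{-1}BC=B$.
\end{enumerate}
Hence the nonempty open set $\WW$ of symmetric matrices with distinct eigenvalues lies in $\O_n(K)\cdot\DD$. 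An invariant vanishing on $\DD$ therefore vanishes on $\WW$, hence identically. No tangent spaces, dimension counts, or rationality of $\SO_n$ are needed.

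\textbf{Where your route adds something} is the passage to a non-closed infinite $K$. The paper simply reduces to the algebraically closed case. That reduction tacitly requires $\O_n(K)$-invariance to imply $\O_n(\bar K)$-invariance, and your Cayley argument proves exactly this density when $\operatorname{char}K\neq 2$. In characteristic two over a non-closed field, a separate density statement for $\O_n(K)$ would still be needed. The paper's application does not need it, since there $K$ is algebraically closed.
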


\begin{proof}
The argument resembles the one presented for~\eqref{equation:conjugation:invariant}. Take $\AA$ to be the affine space of $n\times n$ symmetric matrices, $\DD$ the diagonal matrices, and $\WW\subset\AA$ the open set of symmetric matrices having distinct eigenvalues. The permutation matrices $\Sigma_n$ form a subgroup of $\O_n(K)$ that acts on $\DD$. While similar symmetric matrices need not be in the same $\O_n(K)$ orbit when $K$ has characteristic two, see Example~\ref{example:char:two}, it is nonetheless true that $\WW$ is contained in the image of $\DD$ under the action of $\O_n(K)$ by Lemma~\ref{lemma:orthogonal:char:two}.
\end{proof}

We now prove the main result of this section:

\begin{theorem}
\label{theorem:sym}
Fix an integer $n\ge 2$. Let $X$ be a symmetric $n\times n$ matrix of indeterminates over a field $K$, and
\[
\det(tI-X)\ =\ t^n - c_1t^{n-1} + c_2t^{n-2} - \dots +(-1)^nc_n
\]
its characteristic polynomial. Set~$R\colonequals K[X]/(c_1,\dots,c_n)$. Then:
\begin{enumerate}[\quad\rm(1)]
\item The ring $R$ is a complete intersection;
\item The ring $R$ is not normal;
\item If $K$ has characteristic other than two, then $R$ is reduced; it is an integral domain if $n$ is odd, and has two minimal primes if $n$ is even and $K$ is algebraically closed.
\item If $K$ has characteristic two, then $R$ is not reduced.
\end{enumerate}
\end{theorem}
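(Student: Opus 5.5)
The plan is to treat the four parts in order, reusing the techniques from the proof of Theorem~\ref{theorem:nilpotent} together with a Jacobian computation and an orbit analysis for the orthogonal group. Throughout, $S=K[X]$ has dimension $n(n+1)/2$.

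\emph{Part (1).} Specializing the off-diagonal entries $x_{ij}$ ($i<j$) to zero sends $c_1,\dots,c_n$ to the elementary symmetric polynomials in $x_{11},\dots,x_{nn}$. Hence the off-diagonal entries together with $c_1,\dots,c_n$ form a system of parameters for $S$, exactly as in the proof of Theorem~\ref{theorem:nilpotent}. So $c_1,\dots,c_n$ is a regular sequence, $R$ is a complete intersection, and $\dim R=n(n-1)/2$. Since $R$ is Cohen--Macaulay, Serre's criterion reduces normality to $(R_1)$ and reducedness to $(R_0)$. Heights of Jacobian ideals are unchanged by passing to $\bar K$, so I would test both conditions there with the Jacobian criterion.

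\emph{Jacobian tool.} I would use the identity
\[
\frac{\partial}{\partial x_{ij}}\det(tI-X)=-\text{(cofactor)},
\]
which says that the differentials of the $c_k$ at a point $A$ are the coefficients in $t$ of $\operatorname{adj}(tI-A)$, paired against $dX$. This adjugate lies in $K[t]\otimes K[A]$, so the rank of the Jacobian at $A$ is at most the degree of the minimal polynomial of $A$.

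\emph{Parts (2) and (3), characteristic $\neq 2$.} Here I would use that every matrix over $\bar K$ is similar to a symmetric one. I would also use that similar symmetric matrices are orthogonally similar, in the spirit of Lemma~\ref{lemma:orthogonal:char:two}. Together these give that the $\O_n$-orbits on $\VV$ are indexed by partitions of $n$. The orbit of type $\lambda$ has dimension $\tfrac12\bigl(n^2-\sum(\lambda_i')^2\bigr)$, which is half the $\GL_n$-orbit dimension from Remark~\ref{remark:orbits}.

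So the regular nilpotent orbit has dimension $n(n-1)/2=\dim R$, and the orbit of type $(n-1,1)$ has codimension one. At a point of type $(n-1,1)$ the minimal polynomial has degree $n-1$, so by the Jacobian tool $R$ is singular along a codimension-one locus. This fails $(R_1)$, proving (2) for characteristic $\neq 2$.

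For (3), all non-regular orbits have dimension $<\dim R$, so every irreducible component of $\VV$ contains regular nilpotent points. At a regular nilpotent point the adjugate coefficients span all of $K[A]$, so the Jacobian has full rank $n$. Hence $R$ satisfies $(R_0)$ and is reduced.

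For the components, let $N$ be a regular nilpotent symmetric matrix. Its $\O_n$-stabilizer consists of $h\in K[N]$ with $h^{\tr}h=1$. Since $h$ is symmetric, this means $h^2=1$, and writing $h=a_0(1+u)$ with $u$ nilpotent forces $h=\pm\mathbf 1$. Thus the regular orbit is $\O_n/\{\pm\mathbf 1\}$.
\begin{itemize}
\item If $n$ is odd, $-\mathbf 1\notin\SO_n$, so $-\mathbf 1$ meets both components of $\O_n$ and the orbit is irreducible; $R$ is a domain.
\item If $n$ is even, $-\mathbf 1\in\SO_n$, so the orbit has exactly two components, giving two minimal primes.
\end{itemize}

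\emph{Part (4), characteristic two, and (2) in that case.} Non-reduced implies non-normal, so (2) follows from (4) in characteristic two. For (4) I would again argue through $(R_0)$. In characteristic two each off-diagonal variable $x_{ij}$ enters symmetrically, so $\partial c_k/\partial x_{ij}=2\cdot(\text{cofactor})=0$. The Jacobian therefore reduces to the $n\times n$ matrix $D=(\partial c_k/\partial x_{ii})$, and it suffices to show $\det D$ vanishes on $\VV$. Then $R$ is nowhere generically reduced; as it is Cohen--Macaulay, it is not reduced.

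For diagonal $X$, $\det D$ is the Vandermonde determinant. I expect $(\det D)^2$ to equal the discriminant of $\det(tI-X)$ in general, which vanishes on nilpotent matrices. The case $n=2$ supports this: modulo $c_1$ one has $c_2=x_{11}^2-x_{12}^2=(x_{11}+x_{12})^2$, so $x_{11}+x_{12}$ is a nonzero nilpotent. Verifying this discriminant identity, or otherwise producing an explicit nilpotent element for all $n$, is the step I expect to be the main obstacle. The orthogonal-similarity input for Part (3) is the other delicate point.
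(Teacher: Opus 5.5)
The gap is in (4), and therefore also in (2) for characteristic two, which you deduce from (4). You correctly reduce (4) to showing that $\det D$, where $D=(\partial c_k/\partial x_{ii})$, vanishes on $\VV$. However, you check this only on diagonal matrices and for $n=2$. Agreement with the Vandermonde on diagonal matrices says nothing about nilpotent matrices unless you know that $\det D$ is invariant under orthogonal conjugation, and that invariance is the missing idea. Your discriminant identity is true, but it follows from this invariance rather than replacing it.

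In characteristic two one has $(QXQ^{\tr})_{ii}=\sum_j q_{ij}^2x_{jj}$. Consider the induced action on derivations, $g\colon\delta\mapsto g\circ\delta\circ g^{-1}$. The off-diagonal partials of the $c_k$ vanish, so this action carries $D$ to $\tilde{Q}D$ with $\tilde{Q}=(q_{ij}^2)$. Hence $\det D\mapsto(\det Q)^2\det D=\det D$. It follows that $\det D\in S^{\O_n(K)}=K[c_1,\dots,c_n]$ by Proposition~\ref{proposition:symmetric:nullcone}, whose characteristic-two case rests on Lemma~\ref{lemma:orthogonal:char:two}. Being homogeneous of degree $n(n-1)/2>0$, $\det D$ then lies in $(c_1,\dots,c_n)S$. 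This gives the vanishing, and it also yields your identity: the polynomial in the $c_i$ equal to $\det D$ restricts on diagonal matrices to $\prod_{i<j}(x_{ii}+x_{jj})$, whose square is the discriminant.

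Two further remarks. First, (2) need not pass through (4). Every maximal minor of the Jacobian involves the partials $\partial c_n/\partial x_{ij}$, which are multiples of size $n-1$ minors. So the Jacobian ideal lies in $I_{n-1}(X)R$. Moreover $I_{n-1}(X)+(c_1,\dots,c_n)S=I_{n-1}(X)+(c_1,\dots,c_{n-2})S$ has height at most $3+(n-2)=n+1$. This settles (2) in every characteristic. Your orbit argument cannot do this in characteristic two, where orbits are no longer indexed by partitions.

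Second, your characteristic-not-two arguments for (2) and (3) take a genuinely different route from the paper. They rest on the orbit dimension $\frac12\bigl(n^2-\sum(\lambda_i')^2\bigr)$, which you assert without proof. This is the Kostant--Rallis halving for the pair $(\GL_n,\O_n)$; it can be proved using the trace pairing between skew-symmetric and symmetric matrices together with smoothness of the fixed-point group. They also need Lemma~\ref{lemma:orthogonal:similar}~(2), not just Lemma~\ref{lemma:orthogonal:char:two}. The paper avoids any orbit classification: it gets reducedness from an explicit Jacobian minor that becomes triangular modulo $(x_{ij}\mid i+2\le j)$, and density of the regular nilpotent locus from the regular sequence $c_1,\dots,c_n,\Delta$. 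Your route gives a clearer geometric picture of the singular locus; the paper's is more elementary and self-contained.
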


\begin{proof}
(1) The fact that $c_1,\dots,c_n$ form a regular sequence in $S\colonequals K[X]$ follows from the proof of Theorem~\ref{theorem:nilpotent}: if $Z$ is an $n\times n$ generic matrix, the coefficients of its characteristic polynomial, along with $z_{ij}-z_{ji}$ and $z_{ij}$, for $i<j$, form a regular sequence in $K[Z]$.

For the rest of the proof, we assume that the field $K$ is algebraically closed. For~(2), use the Jacobian criterion; since $c_n$ is the determinant of the symmetric matrix $X$, it follows that
\[
\frac{\partial c_n}{\partial x_{ij}}\ \in\ I_{n-1}(X)
\]
for each $i,j$, where $I_{n-1}(X)$ is the ideal of $S$ generated by the size $n-1$ minors of~$X$. Hence the defining ideal of the singular locus of $R$ is contained in $I_{n-1}(X)R$. To verify that $R$ is not normal, it suffices to check that $I_{n-1}(X)R$ has height at most one in $R$, equivalently that
\[
I_{n-1}(X) + (c_1,\dots,c_n)S\ =\ I_{n-1}(X) + (c_1,\dots,c_{n-2})S
\]
has height at most $n+1$ in $S$. But $I_{n-1}(X)$ has height $3$, which settles~(2).

Towards (3), assume that $K$ has characteristic other than two; we first prove that $R$ is reduced. Since $R$ is a complete intersection ring, it suffices to check that it satisfies the Serre condition $(R_0)$. Working in the polynomial ring $S$, set
\[
D\colonequals \det
\begin{bmatrix}
\frac{\partial c_1}{\partial x_{11}} & \frac{\partial c_2}{\partial x_{11}} & \hdots & \frac{\partial c_n}{\partial x_{11}}\\[1ex]
\frac{\partial c_1}{\partial x_{12}} & \frac{\partial c_2}{\partial x_{12}} & \hdots & \frac{\partial c_n}{\partial x_{12}}\\[1ex]
\vdots & \vdots & & \vdots \\[1ex]
\frac{\partial c_1}{\partial x_{1n}} & \frac{\partial c_2}{\partial x_{1n}} & \hdots & \frac{\partial c_n}{\partial x_{1n}}\\[1ex]
\end{bmatrix}.
\]
As $D$ is a maximal minor of the Jacobian matrix, the ring $R_D$ is regular; it suffices to verify that $D$ is not in any minimal prime of the ideal $(c_1,\dots,c_n)S$.

For integers $1 \le a_1 < a_2 < \dots < a_k \le n$ and $1 \le b_1 < b_2 < \dots < b_k \le n$, set
\[
[a_1 \dots a_k\mid b_1 \dots b_k]
\]
to be the determinant of the submatrix of~$X$ with rows $a_1,\dots,a_k$ and columns $b_1,\dots,b_k$. With this notation, one has
\[
c_k\ =\ \sum [a_1 \dots a_k\mid a_1 \dots a_k],
\]
where the summation runs over all $1 \le a_1 < a_2 < \dots < a_k \le n$. Note that
\[
\frac{\partial}{\partial x_{11}} [a_1 \dots a_k\mid a_1 \dots a_k] \ =\
\begin{cases}
[a_2 \dots a_k\mid a_2 \dots a_k] & \text{ if }a_1=1,\\
0 & \text{ else},
\end{cases}
\]
and that for $j\ge 2$ one has
\[
\frac{\partial}{\partial x_{1j}} [a_1 \dots a_k\mid a_1 \dots a_k]\ =\
\begin{cases}
\pm 2 [a_2 \dots a_k\mid a_1 \dots \hat{a_i} \dots a_k] & \text{ if } a_1=1 \text{ and } j=a_i,\\
0 & \text{ else}.
\end{cases}
\]
Working modulo the ideal $(x_{ij} \mid i+2\le j)S$, it follows that
\[
D\ \equiv\ \det
\begin{bmatrix}
1 & * & * & \hdots & * \\
0 & \pm 2 x_{12} & * & \hdots & *\\
0 & 0 & \pm 2x_{12}x_{23} & & *\\
\vdots & \vdots & & & \\
0 & 0 & 0 & & \pm 2x_{12}x_{23}\cdots x_{n-1,n}\\
\end{bmatrix}.
\]
We claim that $D$ is not in any minimal prime of the ideal
\[
\fraka\colonequals (c_1,\dots,c_n)S + (x_{ij} \mid i+2\le j)S,
\]
for which it suffices to check that 
\[
x_{12}\, x_{23}\, \cdots\, x_{n-1,n}
\]
is not in any minimal prime of $\fraka$. But this is clear since $S/\fraka$ is a complete intersection ring with system of parameters $x_{12},x_{23},\dots,x_{n-1,n}$. This completes the proof that the ring $R$ is reduced.

Set $\Delta$ to be the bottom right size $n-1$ minor of $X$; we claim that $c_1,\dots,c_n,\Delta$ form a regular sequence in $S$. The proof is analogous to that used for a generic matrix in Theorem~\ref{theorem:nilpotent}, namely consider the monomial order
\[
x_{11} > x_{12} > \cdots > x_{1n} > x_{22} > x_{23} > \cdots > x_{2n} > \cdots > x_{nn}
\]
and the induced revlex term order on $S$. The leading monomial of each $c_i$ is again the product of the indeterminates on the $i\textsuperscript{th}$ antidiagonal, while the leading monomial of $\Delta$ is the product of the indeterminates on the $(n+1)\textsuperscript{st}$ antidiagonal; the claim follows. Set $\frakp\colonequals I_{n-1}(X)$, and note that $\Delta\in\frakp$.

Let $\AA$ denote affine space corresponding to the indeterminates in the matrix~$X$. To complete the proof of (3), it suffices to determine the number of irreducible components of the algebraic set $\VV\colonequals\VV(c_1,\dots,c_n)$ in~$\AA$. This is the same as the number of irreducible components of the dense subset 
\[
\VV\smallsetminus\VV(\frakp)
\]
consisting of $n\times n$ symmetric regular nilpotent matrices. The group $\SO_n(K)$ acts on this set by conjugation, with the number of irreducible components being the number of orbits; the assertion now follows from Lemma~\ref{lemma:orthogonal:orbits}.

It remains to prove (4). Consider a partial derivative
\[
\frac{\partial c_k}{\partial x_{ij}}.
\]
Since $S$ has characteristic two, it is readily seen that this partial derivative vanishes if $i<j$. It follows that the defining ideal of the singular locus of $R$ is generated by the size $n$ minor 
\[
\Delta\colonequals\det\left[\frac{\partial c_k}{\partial x_{ii}}\right]
\]
of the Jacobian matrix. We claim that $\Delta$ is an element of the subring $K[c_1,\dots,c_n]$ of $S$. Since $n\ge 2$, the claim implies that $\Delta\in(c_1,\dots,c_n)S$, so the image of $\Delta$ in~$R$ is zero and the singular locus of $R$ is $\Spec R$. It follows that $R$ is not reduced.

Consider the conjugation action of the orthogonal group $\O_n(K)$ on $S$. Given a matrix $Q=(q_{ij})$ in $\O_n(K)$, one has
\[
\left(QXQ^\tr\right)_{ii}\ =\ \sum_{j\le k} q_{ij}x_{jk}q_{ik} + \sum_{j>k} q_{ij}x_{kj}q_{ik}.
\]
Since $K$ has characteristic two, it follows that for $j<k$, the coefficient of $x_{jk}$ in the display above is
\[
q_{ij}q_{ik} + q_{ik}q_{ij}\ =\ 0,
\]
so
\[
\left(QXQ^\tr\right)_{ii}\ =\ \sum_j q_{ij}^2x_{jj}.
\]
Specifically, under conjugation by $Q$, one has
\[
\begin{bmatrix}x_{11}\\[1ex] x_{22}\\[1ex] \vdots \\[1ex] x_{nn}\end{bmatrix}
\ \mapsto\
\begin{bmatrix}
q_{11}^2 & q_{12}^2 & \hdots & q_{1n}^2\\[1ex]
q_{21}^2 & q_{22}^2 & \hdots & q_{2n}^2\\[1ex]
\vdots & \vdots & & \vdots \\[1ex]
q_{n1}^2 & q_{n2}^2 & \hdots & q_{nn}^2
\end{bmatrix}
\begin{bmatrix}
x_{11}\\[1ex] x_{22}\\[1ex] \vdots \\[1ex] x_{nn}
\end{bmatrix}
\ =\ \tilde{Q}
\begin{bmatrix}x_{11}\\[1ex] x_{22}\\[1ex] \vdots \\[1ex] x_{nn}\end{bmatrix},
\]
where $\tilde{Q}\colonequals (q^2_{ij})$. Note that $\tilde{Q}\in\O_n(K)$ since $K$ has characteristic two. Quite generally, if a group $G$ acts $K$-linearly on a polynomial ring $S$, there is a compatible action on $\Der_{S|K}$, the module of $K$-linear derivations on $S$, where
\[
g\colon \delta\ \mapsto\ g\circ\delta\circ g^{-1}
\]
for $g\in G$ and $\delta\in\Der_{S|K}$. Using this for $g\colonequals Q\in\O_n(K)$, it follows that
\[
Q\colon \left[\frac{\partial c_k}{\partial x_{ii}}\right]\ \mapsto\ \tilde{Q}\left[\frac{\partial c_k}{\partial x_{ii}}\right],
\]
and hence, taking determinants, that
\[
Q\colon \Delta\ \mapsto\ (\det\tilde{Q})\Delta\ =\ \Delta.
\]
Since $\Delta$ is fixed by the conjugation action of $\O_n(K)$ on $S$, it belongs to the invariant ring $S^{\O_n(K)}$, which is $K[c_1,\dots,c_n]$ by Proposition~\ref{proposition:symmetric:nullcone}.
\end{proof}

\begin{remark}
For $K$ of characteristic two, the expression of $\Delta\colonequals\det\left[\frac{\partial c_k}{\partial x_{ii}}\right]$ as an element of $K[c_1,\dots,c_n]$, for a few small values of $n$, is recorded below; this is solely for the amusement of the authors.
\begin{align*}
n=1 &\qquad \Delta = 1\\
n=2 &\qquad \Delta = c_1\\
n=3 &\qquad \Delta = c_1c_2+c_3\\
n=4 &\qquad \Delta = c_1c_2c_3 + c_1^2c_4 + c_3^2\\
n=5 &\qquad \Delta = c_1c_2c_3c_4 + c_1^2c_4^2 + c_1c_2^2c_5 + c_3^2c_4 + c_2c_3c_5 + c_5^2
\end{align*}
\end{remark}

Versions of the following lemma exist in the literature, e.g.,~\cite[Theorem~27]{Albert} and~\cite[Lemma~1]{BO}; a proof is included here for the convenience of the reader.

\begin{lemma}
\label{lemma:orthogonal:similar}
Let $K$ be an algebraically closed field of characteristic not two.
\begin{enumerate}[\quad\rm(1)]
\item Given a matrix $M\in\GL_n(K)$, there exists a matrix $C\in\GL_n(K)$, that is polynomial in $M$, and satisfies $C^2=M$.

\item Let $A$ and $B$ be symmetric $n\times n$ matrices over $K$. If $A$ and $B$ are similar, then there exists $Q\in\O_n(K)$ with $B=QAQ^\tr$.
\end{enumerate}
\end{lemma}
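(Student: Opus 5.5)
For (1), I would build the square root from a polynomial that works on each generalized eigenspace. Let $\lambda_1,\dots,\lambda_r$ be the distinct eigenvalues of $M$, all nonzero since $M$ is invertible, and let $m_i$ be their multiplicities in the minimal polynomial of $M$. Near each $\lambda_i$, I want a polynomial $p_i(t)$ satisfying $p_i(t)^2\equiv t \bmod (t-\lambda_i)^{m_i}$. To get it, fix $\mu_i$ with $\mu_i^2=\lambda_i$ and truncate the binomial series
\[
\mu_i\Bigl(1+\tfrac{t-\lambda_i}{\lambda_i}\Bigr)^{1/2}.
\]
The binomial coefficients $\binom{1/2}{k}$ have only powers of $2$ in their denominators, so they make sense in $K$ because the characteristic is not two. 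Alternatively, I can lift the root $\mu_i$ of $s^2-t$ over $K[t]/(t-\lambda_i)^{m_i}$ by Hensel's lemma; this works because the derivative $2\mu_i$ is a unit. The Chinese remainder theorem then gives a single $p(t)$ with $p\equiv p_i \bmod (t-\lambda_i)^{m_i}$ for every $i$. Hence $p(t)^2-t$ is divisible by the minimal polynomial of $M$, and $C\colonequals p(M)$ satisfies $C^2=M$. Finally, $C$ is invertible because $\det(C)^2=\det M\neq 0$.

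For (2), suppose $B=PAP^{-1}$ with $P\in\GL_n(K)$. Transposing and using $A^\tr=A$ and $B^\tr=B$ gives
\[
B=(P^{-1})^\tr A P^\tr,
\]
so $P^\tr P$ commutes with $A$. Set $M\colonequals P^\tr P$, which is invertible and symmetric. By (1), there is $C$ with $C^2=M$ and $C=p(M)$ a polynomial in $M$. This has two consequences: $C$ is symmetric, since $M$ is, and $C$ commutes with $A$, since $M$ does. Put $Q\colonequals PC^{-1}$. Then
\[
Q^\tr Q=C^{-1}P^\tr P C^{-1}=C^{-1}MC^{-1}=\mathbf{1},
\]
so $Q\in\O_n(K)$. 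Because $C$ commutes with $A$, we get $QAQ^\tr=QAQ^{-1}=PC^{-1}ACP^{-1}=PAP^{-1}=B$.

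The main obstacle is getting $C$ to be a polynomial in $M$, rather than just some square root. That requirement is exactly what makes $C$ both symmetric and commuting with $A$, and it is where the characteristic-not-two hypothesis enters, through the Hensel/binomial step in (1). The rest of the argument is routine algebra.
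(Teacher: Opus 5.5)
Your proposal is correct and follows essentially the paper's route: a formal power-series square root of $\lambda + x$ at each eigenvalue for (1), and for (2) the polar-type trick of replacing $P$ by an orthogonal matrix using a square root that is polynomial in $P^\tr P$ (the paper uses the mirror version, with $PP^\tr$ commuting with $B$ and $Q = C^{-1}P$). Your explicit Chinese remainder step is a useful addition: it produces the single polynomial in $M$ that the paper's reduction to one Jordan block leaves implicit.
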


\begin{proof}
For (1), we may assume that $M$ is in Jordan form, and then that $M$ is a Jordan block, say with nonzero eigenvalue $\lambda$. Note that $(M-\lambda\mathbf{1})^n=0$. Fix a formal power series $f(x)\in K\llbracket x\rrbracket$ with $f(x)^2=x+\lambda$, and set $C\colonequals f(M-\lambda\mathbf{1})$.

For (2), suppose $B=PAP^{-1}$ for $P\in\GL_n(K)$. Then one has
\[
BPP^\tr\ =\ PAP^\tr\ =\ (PAP^\tr)^\tr\ =\ (BPP^\tr)^\tr\ =\ PP^\tr B,
\]
i.e., $PP^\tr$ commutes with $B$. By (1) there exists a matrix $C$, polynomial in $PP^\tr$, with $C^2=PP^\tr$. Note that~$C$ is therefore symmetric and also commutes with~$B$. Set $Q\colonequals C^{-1}P$. Then $Q$ is orthogonal since $QQ^\tr = C^{-1}PP^\tr C^{-1} = \mathbf{1}$. Finally,
\[
BQ\ =\ BC^{-1}P\ =\ C^{-1}BP\ =\ C^{-1}PA\ =\ QA.
\qedhere
\]
\end{proof}

As a consequence of the previous lemma, we have:

\begin{lemma}
\label{lemma:orthogonal:orbits}
Consider the conjugation action of $\SO_n(K)$ on the set of symmetric regular nilpotent matrices over an algebraically closed field $K$ of characteristic other than two. The action is transitive if $n$ is odd; there are two orbits if $n$ is even.
\end{lemma}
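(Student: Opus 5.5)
By Lemma~\ref{lemma:orthogonal:similar}(2), any two symmetric regular nilpotent matrices are conjugate under $\O_n(K)$. All symmetric regular nilpotent matrices are similar, since each is similar to the nilpotent Jordan block $J$. So $\O_n(K)$ acts transitively on the set $\UU_s$ of symmetric regular nilpotent matrices. The plan is to compare $\SO_n(K)$-orbits with this single $\O_n(K)$-orbit. Since $\SO_n(K)$ has index two in $\O_n(K)$, fix $A\in\UU_s$. Then $\UU_s=\SO_n(K)A\cup\SO_n(K)RA R^\tr$ for any $R\in\O_n(K)$ with $\det R=-1$. Hence there are at most two $\SO_n(K)$-orbits, and exactly one if and only if the stabilizer $\mathrm{Stab}_{\O_n(K)}(A)$ contains an element of determinant $-1$.

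The key step is computing this stabilizer. Since $Q^\tr=Q^{-1}$, an orthogonal $Q$ with $QAQ^\tr=A$ commutes with $A$. Because $A$ is regular nilpotent, its minimal and characteristic polynomials agree. By Remark~\ref{remark:orbits}, $Q$ therefore lies in $K[A]$, say
\[
Q=a_0\mathbf{1}+a_1A+\dots+a_{n-1}A^{n-1}.
\]
All powers of $A$ are symmetric, so $Q^\tr=Q$, and orthogonality reads $Q^2=\mathbf{1}$. Write $Q=a_0(\mathbf{1}+N)$ with $N$ nilpotent in $K[A]$. Then $a_0^2(\mathbf{1}+N)^2=\mathbf{1}$. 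Comparing the unipotent and scalar parts in the group of units of $K[A]$ gives $a_0^2=1$ and $(\mathbf{1}+N)^2=\mathbf{1}$. Since $\operatorname{char}K\neq 2$, the binomial/logarithm argument in the local ring $K[A]\cong K[t]/(t^n)$ shows that the only square root of $1$ with residue $1$ is $1$ itself. Concretely, $2N+N^2=0$ gives $N(2+N)=0$, and $2+N$ is a unit, so $N=0$. Hence the stabilizer is $\{\pm\mathbf{1}\}$.

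Finally, $\det(-\mathbf{1})=(-1)^n$. If $n$ is odd, then $-\mathbf{1}$ is a stabilizing element of determinant $-1$, so $\SO_n(K)$ acts transitively. If $n$ is even, both stabilizer elements have determinant $1$. Then $A$ and $RAR^\tr$ lie in distinct $\SO_n(K)$-orbits: if $RAR^\tr=SAS^\tr$ with $S\in\SO_n(K)$, then $S^{-1}R$ would be a stabilizer element of determinant $-1$. So there are exactly two orbits.

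The step requiring the most care is verifying that $\UU_s$ is nonempty, which is needed for the count. This is where I would spend effort: Lemma~\ref{lemma:orthogonal:similar} gives conjugacy between symmetric matrices, but existence must be shown separately. I would use the standard fact that over an algebraically closed field of characteristic not two, every matrix is similar to a symmetric one. For example, $J$ is similar to the complex-symmetric matrix $\tfrac12\bigl((J+J^\tr)+i(J-J^\tr)'\bigr)$-type construction. Cleaner is to take $P$ with $PJP^{-1}$ symmetric via the Hankel trick: with $H$ the reversal matrix, both $JH$ and $H$ are symmetric. Then one factors $H=CC^\tr$ using square roots, available since $\operatorname{char}K\ne2$, and $C^{-1}JC$ is symmetric.
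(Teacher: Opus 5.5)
Your proposal is correct and follows the paper's proof. You get $\O_n(K)$-transitivity from Lemma~\ref{lemma:orthogonal:similar}(2), compute the stabilizer as $K[A]\cap\O_n(K)=\{\pm\mathbf{1}\}$ via Remark~\ref{remark:orbits} and the square roots of $1$ in $K[t]/(t^n)$, and use $\det(-\mathbf{1})=(-1)^n$. You also spell out the index-two coset count and the nonemptiness of the set, both of which the paper leaves implicit. The nonemptiness argument via $JH$, $H$ symmetric and $H=CC^\tr$ is valid, though you should drop the garbled ``complex-symmetric'' formula that precedes it.
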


\begin{proof}
The conjugation action of $\O_n(K)$ on symmetric regular nilpotent matrices is transitive by Lemma~\ref{lemma:orthogonal:similar}~(2). Let $N$ be a symmetric regular nilpotent matrix; by Remark~\ref{remark:orbits}, the set of $n\times n$ matrices that commute with $N$ is precisely $K[N]\cong K[t]/(t^n)$, so the stabilizer of $N$ under the $\O_n(K)$ action is
\[
K[N]\cap\O_n(K)\ =\ \{\pm\mathbf{1}\},
\]
the only square roots of $1$ in the ring $K[t]/(t^n)$ being $\pm 1$. It follows that the stabilizer of $N$ under the $\SO_n(K)$ action, i.e., $K[N]\cap\SO_n(K)$, is trivial if $n$ is odd, and $\{\pm\mathbf{1}\}$ if $n$ is even.
\end{proof}

\begin{example}
\label{example:char:two}
Lemma~\ref{lemma:orthogonal:similar} does not hold over a field $K$ of characteristic two: the $2\times 2$ Jordan block
\[
\begin{bmatrix}1&1\\ 0&1\end{bmatrix}
\]
is not a square in $\GL_2(K)$ so (1) fails; for (2), taking $\alpha\in K\smallsetminus\{0,1\}$, the symmetric nilpotent matrices
\[
A = \begin{bmatrix}1&1\\ 1&1\end{bmatrix}
\quad\text{ and }\quad
B = \begin{bmatrix}\alpha&\alpha\\ \alpha&\alpha\end{bmatrix}
\]
are similar, but not conjugate under the action of $\O_2(K)$.
\end{example}

Nonetheless, for the purposes of a characteristic-free proof of Proposition~\ref{proposition:symmetric:nullcone}, the following suffices:

\begin{lemma}
\label{lemma:orthogonal:char:two}
Let $A$ be a symmetric $n\times n$ matrix with entries from an algebraically closed field~$K$. If $A$ has distinct eigenvalues, then there exists $Q\in\O_n(K)$ such that~$QAQ^\tr$ is diagonal.
\end{lemma}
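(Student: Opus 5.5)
The approach is to diagonalize $A$ using eigenvectors that are orthonormal with respect to the standard bilinear form $\langle u,v\rangle\colonequals u^\tr v$. This has to work in every characteristic, including two, so Lemma~\ref{lemma:orthogonal:similar} is not available.

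Since the eigenvalues $\lambda_1,\dots,\lambda_n$ of $A$ are distinct, $A$ is diagonalizable. Pick eigenvectors $v_1,\dots,v_n$ with $Av_i=\lambda_iv_i$, forming a basis of $K^n$.

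The first step is orthogonality. For $i\neq j$, symmetry of $A$ gives
\[
\lambda_i\, v_i^\tr v_j\ =\ (Av_i)^\tr v_j\ =\ v_i^\tr A v_j\ =\ \lambda_j\, v_i^\tr v_j,
\]
and since $\lambda_i\neq\lambda_j$ we get $v_i^\tr v_j=0$.

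The second step is normalization, which is the only delicate point. Over an algebraically closed field an eigenvector may be isotropic, that is, $v_i^\tr v_i=0$, and this can happen in any characteristic. We rule it out as follows. The standard form is nondegenerate and the $v_j$ span $K^n$. If $v_i^\tr v_i=0$, then $v_i$ would also be orthogonal to itself, hence to every $v_j$, hence to all of $K^n$. That would force $v_i=0$, which is a contradiction. So $v_i^\tr v_i\neq0$ for every $i$. Since $K$ is algebraically closed, we can choose $\mu_i$ with $\mu_i^2=v_i^\tr v_i$; this works also in characteristic two, where squares always exist in $K$. Replace $v_i$ by $v_i/\mu_i$, so that $v_i^\tr v_j=\delta_{ij}$.

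Finally, let $P$ be the matrix with columns $v_1,\dots,v_n$. Then $P^\tr P=\mathbf{1}$, so $P\in\O_n(K)$, and $AP=P\,\mathrm{diag}(\lambda_1,\dots,\lambda_n)$. Setting $Q\colonequals P^\tr=P^{-1}\in\O_n(K)$, we obtain
\[
QAQ^\tr\ =\ P^{-1}AP\ =\ \mathrm{diag}(\lambda_1,\dots,\lambda_n).
\]
The main obstacle is exactly the non-isotropy of the eigenvectors in the second step, and nondegeneracy of the form combined with mutual orthogonality handles it.
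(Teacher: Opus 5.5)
Your proof is correct, and it is essentially the paper's argument written in terms of eigenvectors instead of matrices. The paper takes $P\in\GL_n(K)$ with $B\colonequals PAP^{-1}$ diagonal and notes that $PP^\tr$ commutes with $B$, hence is diagonal; this is your orthogonality step, since the rows of $P$ are eigenvectors of $A$. It then sets $Q\colonequals C^{-1}P$ for a diagonal $C$ with $C^2=PP^\tr$, which is your normalization step, and the nonvanishing of the diagonal entries of $PP^\tr$ (your non-isotropy argument) is implicit there in the invertibility of $P$.
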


\begin{proof}
Since $A$ has distinct eigenvalues, there exists $P\in\GL_n(K)$ such that $B\colonequals PAP^{-1}$ is diagonal. But then, as in the proof of Lemma~\ref{lemma:orthogonal:similar}, $PP^\tr$ commutes with $B$. As $B$ is diagonal with distinct diagonal entries, $PP^\tr$ must be diagonal. Let~$C$ be a diagonal matrix with $C^2=PP^\tr$ and set $Q\colonequals C^{-1}P$. Then $Q$ is indeed orthogonal and
\[
QAQ^\tr\ =\ QAQ^{-1}\ =\ C^{-1}PAP^{-1}C\ =\ C^{-1}BC\ =\ B,
\]
where the last equality holds since $B$ and $C$ commute.
\end{proof}

\begin{remark}
Let $K$ be a field of characteristic $p>0$, and let $R$ be as in Theorem~\ref{theorem:sym}. It follows from part (2) of the theorem that $R$ is not $F$-regular; alternatively, the $a$-invariant of the complete intersection ring $R$ is readily seen to be $a(R)=0$. By part (4), $R$ is not $F$-pure in the case $p=2$.

Suppose that $p$ is odd. For $n=2$, we have $R\cong K[x,y]/(x^2+y^2)$, which is~$F$-pure; however, if $n\ge 3$, we conjecture that $R$ is not $F$-pure.
\end{remark}

\section*{Acknowledgments}

The paper arose from questions raised by Craig Huneke at \emph{RayFest}, University of Nebraska-Lincoln, April 2025; we are grateful to Craig and to Linquan Ma for useful discussions. Some of the results were suggested by examples computed with \texttt{Macaulay2}~\cite{Macaulay2} and \texttt{Magma}~\cite{Magma}; \texttt{ChatGPT-5.5 Pro} was used towards finding suitable references.

\bibliographystyle{amsalpha}

\begin{thebibliography}{BCRV}

\bibitem[Al]{Albert}
A.~A.~Albert, \emph{Symmetric and alternate matrices in an arbitrary field, I}, Trans. Amer. Math. Soc.~\textbf{43} (1938), 386--436.

\bibitem[BCP]{Magma}
W.~Bosma, J.~Cannon, and C.~Playoust, \emph{The Magma algebra system. I. The user language}, J. Symbolic Comput.~\textbf{24} (1997), 235--265.

\bibitem[BCRV]{BCRV}
W.~Bruns, A.~Conca, C.~Raicu, and M.~Varbaro, \emph{Determinants, Gr\"{o}bner bases and cohomology}, Springer Monogr. Math., Springer, Cham, 2022. 

\bibitem[BO]{BO}
D.~Bukov\v{s}ek and M.~Omladi\v{c}, \emph{Linear spaces of symmetric nilpotent matrices}, Linear Algebra Appl.~\textbf{530} (2017), 384--404.

\bibitem[Ca]{Carvajal-Rojas}
J.~A.~Carvajal-Rojas, \emph{Finite torsors over strongly $F$-regular singularities}, \'Epijournal G\'eom. Alg\'ebrique~\textbf{6} (2022), article no. 1, 30~pp.

\bibitem[CM]{Collingwood:McGovern}
D.~H.~Collingwood and W.~M.~McGovern, \emph{Nilpotent orbits in semisimple Lie algebras}, Van Nostrand Reinhold Mathematics Series, Van Nostrand Reinhold Co., New York, 1993.

\bibitem[DK]{Derksen:Kemper}
H.~Derksen and G.~Kemper, \emph{Computational invariant theory}, Encyclopaedia of Mathematical Sciences \textbf{130}, Springer-Verlag, Berlin, 2002.

\bibitem[Do]{Donkin}
S.~Donkin, \emph{The normality of closures of conjugacy classes of matrices}, Invent. Math.~\textbf{101} (1990), 717--736.

\bibitem[Fe]{Fedder}
R.~Fedder, \emph{$F$-purity and rational singularity}, Trans. Amer. Math. Soc.~\textbf{278} (1983), 461--480.

\bibitem[FW]{Fedder:Watanabe}
R.~Fedder and K.-i.~Watanabe, \emph{A characterization of $F$-regularity in terms of $F$-purity}, in: Commutative algebra (Berkeley, CA, 1987), pp.~227--245, Math. Sci. Res. Inst. Publ.~\textbf{15}, Springer, New York, 1989.

\bibitem[Gl]{Glassbrenner}
D.~Glassbrenner, \emph{Strong $F$-regularity in images of regular rings}, Proc. Amer. Math. Soc.~\textbf{124} (1996), 345--353.

\bibitem[GS]{Macaulay2}
D.~R.~Grayson and M.~E.~Stillman, \emph{Macaulay2, a software system for research in algebraic geometry}, available at \url{http://www.math.uiuc.edu/Macaulay2/}.

\bibitem[HH]{HH:strong}
M.~Hochster and C.~Huneke, \emph{Tight closure and strong $F$-regularity}, Mem. Soc. Math. France~\textbf{38} (1989), 119--133.

\bibitem[HJPS]{HJPS}
M.~Hochster, J.~Jeffries, V.~Pandey, and A.~K.~Singh, \emph{When are the natural embeddings of classical invariant rings pure?} Forum Math. Sigma~\textbf{11} (2023), paper no. e67, 43~pp.

\bibitem[KP]{KP}
H.~Kraft and C.~Procesi, \emph{Closures of conjugacy classes of matrices are normal}, Invent.~Math.~\textbf{53} (1979), 227--247.

\bibitem[Lo1]{Lorincz:ASENS}
A.~C.~L\H orincz, \emph{On the collapsing of homogeneous bundles in arbitrary characteristic}, Ann. Sci. \'Ec. Norm. Sup\'er.~(4)~\textbf{56} (2023), 1313--1337.

\bibitem[Lo2]{Lorincz:symplectic}
A.~C.~L\H orincz, \emph{Singularities of orthogonal and symplectic determinantal varieties},\newline
\url{https://arxiv.org/abs/2311.07549}.

\bibitem[MvdK]{Mehta:vdK}
V.~B.~Mehta and W.~van der Kallen, \emph{A simultaneous Frobenius splitting for closures of conjugacy classes of nilpotent matrices}, Compositio Math.~\textbf{84} (1992), 211--221.

\bibitem[PTW]{PTW}
V.~Pandey, Y.~Tarasova, and U.~Walther, \emph{On the natural nullcones of the symplectic and general linear groups}, J. Lond. Math. Soc.~\textbf{111} (2025), paper no.~e70078, 31pp.

\bibitem[Po]{Popov}
V.~L.~Popov, \emph{Picard groups of homogeneous spaces of linear algebraic groups and one-dimensional homogeneous vector fiberings}, Izv. Akad. Nauk SSSR Ser. Mat.~\textbf{38} (1974), 294--322.

\bibitem[Si]{Singh:cyclic}
A.~K.~Singh, \emph{Cyclic covers of rings with rational singularities}, Trans. Amer. Math. Soc.~\textbf{355} (2003), 1009--1024.

\bibitem[Sm]{Smith:ratsing}
K.~E.~Smith, \emph{$F$-rational rings have rational singularities}, Amer. J.~Math.~\textbf{119} (1997), 159--180.

\bibitem[Wa]{Watanabe:dim2}
K.-i. Watanabe, \emph{$F$-regular and $F$-pure normal graded rings}, J.~Pure Appl. Algebra~\textbf{71} (1991), 341--350.

\bibitem[We]{Weyman}
J.~Weyman, \emph{Cohomology of vector bundles and syzygies}, Cambridge Tracts in Math.~\textbf{149}, Cambridge University Press, Cambridge, 2003.

\end{thebibliography}

\end{document}